\newtheorem{theorem}{Theorem}[section]
\newtheorem{exmp}[theorem]{Example}
\newtheorem{lemma}[theorem]{Lemma}
\newtheorem{corollary}[theorem]{Corollary}
\newtheorem{assumption}[theorem]{Assumption}
\numberwithin{equation}{section}
\newcommand{\be}{\begin{equation}}
\newcommand{\ee}{\end{equation}}
\newcommand{\bes}{\begin{equation*}}
\newcommand{\ees}{\end{equation*}}
\def\E{\bE}
\def\bE{\mathbb{E}}
\newcommand{\R}{\mathbf{R}}
\renewcommand{\d}{{\rm d}}
\renewcommand{\geq}{\geqslant}
\renewcommand{\leq}{\leqslant}
\def\m1{\mathbf{1}}
\author{Mohammud Foondun\\University of Strathclyde \and Ngartelbaye Guerngar\\Auburn University\\
\and
Erkan Nane\\ Auburn University}
\title{Some properties of non-linear fractional stochastic heat equations on bounded domains
\date{}
}
\begin{document}

\maketitle

\begin{abstract}
Consider the following stochastic partial differential equation,
\begin{equation*}
\partial_t u_t(x)=
 \mathcal{L}u_t(x)+ \xi\sigma (u_t(x)) \dot F(t,x),
\end{equation*}
where $\xi$ is a positive parameter and $\sigma$ is a globally Lipschitz continuous function. The stochastic forcing term $\dot F(t,x)$ is white in time but possibly colored in space.  The operator $\mathcal{L}$ is a non-local operator. We study the behaviour of the solution with respect to the parameter $\xi$, extending the results in \cite{FoonNual} and \cite{Bin}.\\

\noindent{\it Keywords:}
Stochastic  fractional PDEs, large time behavior, colored noise.\\

\noindent{\it \noindent AMS 2010 subject classification:}
Primary 60H15; Secondary: 35K57.
\end{abstract}

\section{Introduction and main results}

Stochastic Partial Differential Equations (SPDEs) have been used recently in many disciplines ranging from applied mathematics, statistical mechanics and theoretical physics to theoretical neuroscience, theory of complex chemical reactions (including polymer science), fluid dynamics and mathematical {finance} to quote only a few; see for example {\color{blue}\cite{Davar-CBMS}} and references therein.

In \cite {FoonNual}, the authors considered the following {stochastic heat equation},
\begin{equation}\label{eq1}
\partial_t u_t(x)=
 \mathcal{L}u_t(x)+ \xi\sigma (u_t(x)) \dot F(t,x),
\end{equation}
where $\mathcal{L}$ is the Dirichlet Laplacian on $B_R(0)$, the ball of radius $R$ {centered at the origin}. Under some appropriate conditions, it was shown that the long time behaviour of the solution is dependent on the {\it{noise level}}, that is on the values of $\xi$. More precisely, it was shown that for large values of $\xi$, the moments of the solution grow exponentially with time while for small values of $\xi$, the moments decay exponentially. In this { paper}, we extend the results of \cite{FoonNual} by taking $\mathcal{L}$ to be a non-local operator, the generator of a killed stable process, namely ${\mathcal{L}:= -\nu(-\Delta)^{\alpha/2}}\ \text{for} \ 0<\alpha\leq 2$ with {zero} exterior boundary conditions. We also provide some clarification and simplification of the proofs in \cite{FoonNual}. Non-local operators are becoming increasingly important due to their wide applicability for modeling purposes. The class of equations we study can for instance be used to model particles moving in a discontinuous fashion while being subject to { some} branching mechanism; see for example Walsh \cite{walsh}.

Throughout this paper, the initial condition $u_0$ is always {assumed} to be a non-negative bounded deterministic function such that for some set $K\subset B_R(0)$, the quantity
\begin{equation*}
\int_K u_0(x)\,\d x
\end{equation*}
is strictly positive.  The function $\sigma$ will be subjected to the following condition.
\begin{assumption}\label{cond}
The function $\sigma$ is assumed to be a globally Lipschitz function satisfying
$$
 l_\sigma |x|\leq {|\sigma(x)|}\leq L_\sigma |x| \quad\text{for all}\quad x\in \R,
 $$
for some positive constants $l_\sigma$ and $L_\sigma$.
\end{assumption}
Following Walsh \cite{walsh}, we look at the mild solution of \eqref{eq1} satisfying the following integral equation,
\begin{equation}\label{mild}
u_t(x)= (\mathcal{G}_{D} u_0)_t (x)+  \xi \int_{B_R(0)}\int_{0}^{t} p_D (t-s,x,y)\sigma(u_s(y))F(\d s{,}  \d y),
\end{equation}
where
\begin{align*}
(\mathcal{G}_{D} u_0)_t (x)=\int_{B_R(0)} u_0 (y)p_D (t,x,y)  \d y,
\end{align*}
and
\(p_D(t,x,y)\) denotes the heat kernel of the stable process.  When the driving noise is white in space and time, existence-uniqueness considerations impose the conditions that $d=1$ and $1<\alpha<2$. When  the noise term is not space-time white noise, it will be spatially correlated that is,
\begin{equation*}
\E\dot F(s,\,x)\dot F(t,\,y)=\delta_0(t-s)f(x,y),
\end{equation*}
where the correlation function $f$ satisfies the inequality $f(x,y)\leq \tilde{f}(x-y)$, and ${\tilde{f}}$ is a locally integrable positive continuous function on $\R^d\backslash \{0\}$ satisfying the following Dalang  type condition,
\begin{equation*}
\int_{\R^d}\frac{\hat{\tilde f}(\xi)}{1+|\xi|^\alpha}\,\d \xi<\infty,
\end{equation*}
where $\hat{\tilde f}$ denotes the Fourier transform of $\tilde{f}$; {see \cite{Dalang99}}. We will impose the following non-degeneracy condition on $f$,
\begin{assumption}\label{cond-f}
There exists a constant $K_R$ such that
\begin{equation*}
\inf_{x,y\in B_R(0)}f(x,y)\geq K_R.
\end{equation*}
\end{assumption}
The above conditions on the correlation function are  quite mild. Examples of correlation function{\color{red}s} satisfying {Assumption \ref{cond-f}} include the Riesz kernel, Cauchy kernels and many more{: See, for example,  \cite{FK2} and \cite{FLN}.} Our first set of results concerns equation \eqref{eq1} when the driving noise is space-time white noise which we denote by $\dot W$.  In other words, we are looking at
\begin{equation}\label{fspde}
\begin{cases}
\partial_t u_t(x)=\mathcal{L} u_t(x)+ \xi\sigma (u_t(x)) \dot W(t,x), \ \ \ \ x\in B_R(0), \ \ \ t>0\\
 u_t(x)=0, \ \  \  x\in B_R(0)^c.
\end{cases}
\end{equation}

\begin{theorem}\label{grw-dec1}
Let $u_t(x)$ be the unique solution of equation \eqref{fspde}, then there exists $\xi_0 >0 $ such that for all $\xi < \xi_0$ and $x \in B_R(0)$,
\\
$$-\infty < \limsup\limits_{t\to\infty }  \frac{1}{t} \log \E|u_t (x)|^2 < 0. $$
\\
Fix $\varepsilon >0$, then there exists $\xi_1>0$ such that for all $\xi > \xi_1$ and $x\in B_{R-\epsilon}(0)$,
$$0< \liminf\limits_{t\to\infty} \frac{1}{t}\log \E|u_t(x)|^2 < \infty. $$
\end{theorem}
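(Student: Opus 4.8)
The plan is to obtain the two assertions from moment estimates on the mild solution \eqref{mild}, treating the upper (decay) bound and the lower (growth) bound by essentially dual arguments. Throughout, the key analytic input is control of the Dirichlet heat kernel $p_D(t,x,y)$ of the killed stable process: on the one hand $p_D(t,x,y) \le p(t,x-y)$, the free stable kernel, and on the other hand, for $x$ in a slightly smaller ball $B_{R-\varepsilon}(0)$ and $t$ bounded away from $0$, one has a lower bound $p_D(t,x,y) \ge c\, p(t,x-y)$ on compact subsets (intrinsic ultracontractivity / the standard comparison for killed stable processes). Equally important is the spectral fact that the semigroup $\mathcal{G}_D$ decays like $\e^{-\lambda_1 t}$, where $\lambda_1 = \lambda_1(R) > 0$ is the principal Dirichlet eigenvalue of $\nu(-\Delta)^{\alpha/2}$ on $B_R(0)$; in particular $(\mathcal{G}_D u_0)_t(x) \le C\e^{-\lambda_1 t}$ and, for $x \in B_{R-\varepsilon}(0)$, $(\mathcal{G}_D u_0)_t(x) \ge c\, \e^{-\lambda_1 t}$ using the positivity hypothesis on $\int_K u_0$.

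For the decay part, apply the $L^2$ isometry to \eqref{mild} and Assumption \ref{cond} to get
\[
\E|u_t(x)|^2 \le 2\big((\mathcal{G}_D u_0)_t(x)\big)^2 + 2\xi^2 L_\sigma^2 \int_0^t \int_{B_R(0)} p_D(t-s,x,y)^2\, \E|u_s(y)|^2\, \d y\, \d s .
\]
Set $M(t) := \sup_{x \in B_R(0)} \E|u_t(x)|^2$. Using $p_D \le p$ and the on-diagonal bound $\int_{\R^d} p(r,z)^2\,\d z \le C r^{-d/\alpha}$ (which in the white-noise case requires $d=1$, $1<\alpha<2$ so that $d/\alpha < 1$ and the kernel is square-integrable in time near $0$), one gets a renewal-type inequality $M(t) \le C\e^{-2\lambda_1 t} + C\xi^2 \int_0^t (t-s)^{-1/\alpha} M(s)\,\d s$. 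Choosing $\xi$ small and running a Gronwall/Laplace-transform argument on $e^{2\lambda_1 t}M(t)$ forces $\limsup_t \tfrac1t \log M(t) < 0$; finiteness from below (the $-\infty <$ part) is immediate since $u$ is a genuine solution with finite second moments that cannot vanish faster than the deterministic part once $\xi>0$, or more simply since $\E|u_t(x)|^2 \ge \big((\mathcal{G}_D u_0)_t(x)\big)^2 > 0$ for $x$ where the semigroup term is positive. I would localize the last point to $x \in B_R(0)$ via the strict positivity coming from $\int_K u_0 > 0$ and the strong positivity of $p_D$.

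For the growth part, fix $\varepsilon > 0$ and restrict to $x \in B_{R-\varepsilon}(0)$. Now use the lower bounds: $(\mathcal{G}_D u_0)_t(x) \ge c_1 \e^{-\lambda_1 t}$, $|\sigma(u)| \ge l_\sigma |u|$, $f \ge K_R$ on $B_R(0)$ by Assumption \ref{cond-f}, and $p_D(r,x,y) \ge c_2\, p(r,x-y)$ for $y$ ranging over a fixed compact core of $B_R(0)$ and $r$ in a fixed compact interval. Squaring \eqref{mild}, dropping cross terms (they are nonnegative after taking expectation, since the stochastic integral has mean zero), and lower-bounding yields
\[
\E|u_t(x)|^2 \ge c_1^2 \e^{-2\lambda_1 t} + \xi^2 l_\sigma^2 K_R \int_0^t \int_{B_R(0)^2} p_D(t-s,x,y)p_D(t-s,x,y')\, \E\big[u_s(y)u_s(y')\big]\, \d y\, \d y'\, \d s .
\]
To close this I would iterate: define $m(t) := \inf_{x \in B_{R-\varepsilon}(0)} \E|u_t(x)|^2$ and show, using the kernel lower bound and the spectral gap, an inequality of the form $m(t) \ge c_1^2\e^{-2\lambda_1 t} + c\,\xi^2 \int_0^t \kappa(t-s)\, m(s)\,\d s$ with $\kappa$ a fixed positive kernel (heuristically $\kappa(r) \asymp \e^{-2\lambda_1 r}$ after integrating the product of two Dirichlet kernels against the principal eigenfunction). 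Passing to Laplace transforms, $\widehat m(\beta) \ge \tfrac{c_1^2}{\beta+2\lambda_1}\big(1 - c\xi^2 \widehat\kappa(\beta)\big)^{-1}$, and for $\xi$ large the pole of the right side moves to a value of $\beta$ that is positive, which gives $\liminf_t \tfrac1t\log m(t) > 0$; the finiteness from above follows from the decay-side isometry argument run without the smallness of $\xi$ (it gives a finite, possibly positive, exponential rate for all $\xi$).

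The main obstacle is the lower-bound iteration: unlike the classical Laplacian on $B_R(0)$, for the killed stable process one must be careful that $p_D(r,x,y) \ge c\, p(r,x-y)$ only holds with constants that depend on how far $x$ is from the boundary and on the time window, which is exactly why the statement restricts the growth conclusion to $B_{R-\varepsilon}(0)$; assembling a clean renewal inequality for $m(t)$ requires choosing the compact core and time window uniformly and tracking that the resulting kernel $\kappa$ genuinely has a Laplace transform whose reciprocal of $(1 - c\xi^2\widehat\kappa)$ develops a positive-real pole for large $\xi$. I expect this to mirror Lemma-level estimates in \cite{FoonNual}, with the stable heat-kernel bounds (e.g. from the theory of intrinsically ultracontractive killed stable semigroups) substituted for the Gaussian ones. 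The colored-noise variant is handled the same way after replacing the $L^2$ isometry by the covariance formula with $f$, using the Dalang condition to ensure the relevant space integral $\int p(r,z)p(r,z')\tilde f(z-z')\,\d z\,\d z'$ is finite and behaves like a negative power of $r$ integrable near $0$.
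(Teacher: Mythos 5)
Your growth half follows essentially the paper's route: a Laplace-transform/renewal inequality for $\inf_{x\in B_{R-\varepsilon}(0)}\E|u_t(x)|^2$, fed by the interior lower bound on $p_D$ and $|\sigma(u)|\ge l_\sigma|u|$, with the resolvent developing a singularity at a strictly positive abscissa once $\xi$ is large (the paper phrases this as: for any fixed $\beta>0$ the inequality $I_\beta\ge A+B\xi^2 I_\beta$ with $B\xi^2\ge 2$ forces $I_\beta=\infty$, hence $\liminf_t\frac1t\log\E|u_t(x)|^2\ge\beta$). That part is sound, modulo the remark that for the white-noise equation \eqref{fspde} the It\^o--Walsh isometry produces the diagonal term $\int_{B_R(0)} p_D^2(t-s,x,y)\,\E|\sigma(u_s(y))|^2\,\d y$ rather than a double integral against $f$, so Assumption \ref{cond-f} plays no role there.

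The decay half has a genuine gap. After bounding $p_D\le p$ and using $\int_{\R}p(r,z)^2\,\d z\le Cr^{-1/\alpha}$ you arrive at
\begin{equation*}
M(t)\le C\e^{-2\lambda_1 t}+C\xi^2\int_0^t (t-s)^{-1/\alpha}M(s)\,\d s,
\end{equation*}
and this inequality cannot yield $\limsup_t\frac1t\log M(t)<0$ for any $\xi>0$: the convolution kernel $r^{-1/\alpha}$ has infinite mass on $(0,\infty)$, so the associated renewal equation has a solution growing like $\e^{\beta^* t}$ with $\beta^*=\big(C\xi^2\Gamma(1-1/\alpha)\big)^{\alpha/(\alpha-1)}>0$, and the Gronwall/Laplace argument only gives $M(t)\le C\e^{\beta^* t}$. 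Making $\xi$ small shrinks $\beta^*$ but never makes it negative. The missing idea is that you must not discard the Dirichlet kernel in the nonlinear term: by Chapman--Kolmogorov, $\int_{B_R(0)}p_D^2(r,x,y)\,\d y= p_D(2r,x,x)$, and the killed kernel satisfies $p_D(2r,x,x)\le c\,\e^{-2\mu_1 r}$ for $r\ge t_0$, so $\int_0^\infty \e^{\beta r}p_D(2r,x,x)\,\d r=:K_\beta<\infty$ for every $\beta\in(0,2\mu_1)$ (this is the paper's Lemma \ref{lm2}). One then gets the self-bounding inequality $\sup_{t,x}\e^{\beta t}\E|u_t(x)|^2\le C+K_\beta\xi^2L_\sigma^2\sup_{t,x}\e^{\beta t}\E|u_t(x)|^2$, and choosing $\xi_0$ so that $K_\beta\xi_0^2L_\sigma^2<\tfrac12$ closes the argument and delivers the strictly negative exponential rate $-\beta$. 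The rest of your outline (the $>-\infty$ bound via $\E|u_t(x)|^2\ge|(\mathcal{G}_Du_0)_t(x)|^2$ and the $<\infty$ bound) matches the paper.
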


As in \cite{FoonNual}, we define the energy of the solution by the following { quantity,}
\begin{equation}\label{energy}
\mathcal{E}_t (\xi)= \sqrt{\E{\|u_t\|}_{L^2(B_R(0))}^2}.
\end{equation}
The next corollary now follows easily from the above theorem.
\begin{corollary}\label{energ1}
With $\xi_0 $ and $\xi_1$ as in Theorem \ref{grw-dec1}, we have
\begin{align*}
-\infty<\limsup\limits_{t\to \infty} \frac{1}{t}\log \mathcal{E}_t (\xi)<0 \ \ \ \text{for all} \ \ \ \xi< \xi_0
\end{align*}
and
\begin{align*}
0<\liminf\limits_{t\to\infty}\frac{1}{t}\log\mathcal{E}_t (\xi)<\infty \ \ \ \text{for all }\ \ \ \xi>\xi_1.
\end{align*}
\end{corollary}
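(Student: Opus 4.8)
The plan is to derive the two-sided bounds on $\mathcal{E}_t(\xi)$ directly from the pointwise second-moment estimates of Theorem \ref{grw-dec1}, exploiting the fact that the exponential \emph{upper} bounds on $\E|u_t(x)|^2$ obtained in its proof are uniform in $x$, while the matching lower bounds can be read off from the deterministic part of the mild solution \eqref{mild} (for the statement preventing super-exponential decay) and from an integrated estimate already contained in the proof of the growth part of Theorem \ref{grw-dec1}.

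For $\xi<\xi_0$ I would argue as follows. The proof of Theorem \ref{grw-dec1} yields constants $C,\gamma>0$, independent of $x$, with $\E|u_t(x)|^2\le Ce^{-\gamma t}$ for every $x\in B_R(0)$ and $t\ge1$; integrating over $B_R(0)$ gives $\mathcal{E}_t(\xi)^2\le|B_R(0)|\,C\,e^{-\gamma t}$, so $\limsup_{t\to\infty}\frac1t\log\mathcal{E}_t(\xi)\le-\gamma/2<0$. For the reverse bound I would use that the stochastic integral in \eqref{mild} has mean zero, whence $\E|u_t(x)|^2\ge|(\mathcal{G}_D u_0)_t(x)|^2$ and therefore $\mathcal{E}_t(\xi)^2\ge\|(\mathcal{G}_D u_0)_t\|_{L^2(B_R(0))}^2$. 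Expanding in the Dirichlet eigenbasis, $p_D(t,x,y)=\sum_{n\ge1}e^{-\lambda_n t}\phi_n(x)\phi_n(y)$ with $0<\lambda_1<\lambda_2\le\cdots$ the eigenvalues of $-\mathcal{L}$ on $B_R(0)$ and $\phi_1>0$, I get $\|(\mathcal{G}_D u_0)_t\|_{L^2}^2=\sum_{n\ge1}e^{-2\lambda_n t}\langle u_0,\phi_n\rangle^2\ge e^{-2\lambda_1 t}\langle u_0,\phi_1\rangle^2$, and $\langle u_0,\phi_1\rangle>0$ since $u_0\ge0$, $\int_K u_0>0$ and $\phi_1>0$. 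Hence $\limsup_{t\to\infty}\frac1t\log\mathcal{E}_t(\xi)\ge-\lambda_1>-\infty$, and the first display follows.

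For $\xi>\xi_1$ the finiteness $\limsup_{t\to\infty}\frac1t\log\mathcal{E}_t(\xi)<\infty$ would come from the a priori moment bound $\sup_{x\in B_R(0)}\E|u_t(x)|^2\le Ce^{Ct}$ (Walsh's isometry, Assumption \ref{cond}, and Gronwall's lemma), integrated over $B_R(0)$. For the lower bound I would invoke the fact that the growth half of Theorem \ref{grw-dec1} is itself proved by first showing $\int_{B_{R-\epsilon}(0)}\E|u_t(x)|^2\,\d x\ge c\,e^{\beta t}$ for some $\beta>0$ and then localising to a point; since $\mathcal{E}_t(\xi)^2\ge\int_{B_{R-\epsilon}(0)}\E|u_t(x)|^2\,\d x$, this already gives $\liminf_{t\to\infty}\frac1t\log\mathcal{E}_t(\xi)\ge\beta/2>0$, which is the second display.

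The main thing to be careful about — and the only potential obstacle — is the passage from the pointwise formulation of Theorem \ref{grw-dec1} to the $L^2$ formulation: one has to make sure that the exponential upper bound on $\E|u_t(x)|^2$ holds with constants that are genuinely uniform over $x\in B_R(0)$ (so that the integral over the ball does not blow up), and that the growth lower bound is available in the spatially integrated form over $B_{R-\epsilon}(0)$ rather than only pointwise. Both of these are transparent from the proof of Theorem \ref{grw-dec1}, which is why the corollary follows with essentially no extra work.
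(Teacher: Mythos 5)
Your proposal is correct and follows essentially the same route as the paper, which proves the corollary in one line by sandwiching $\int_{B_R(0)}\E|u_t(x)|^2\,\d x$ between $|B_{R-\epsilon}(0)|\inf_{x\in B_{R-\epsilon}(0)}\E|u_t(x)|^2$ and $|B_R(0)|\sup_{x\in B_R(0)}\E|u_t(x)|^2$ and then invoking Theorem \ref{grw-dec1}. The uniformity issues you flag are indeed the only point of substance, and they are already supplied by the proof of the theorem (the upper bound is established as a bound on $\sup_{t>0,\,x\in B_R(0)}e^{\beta t}\E|u_t(x)|^2$ and the lower bounds as bounds on the infimum over $B_{R-\epsilon}(0)$), so your extra detours through the eigenfunction expansion and Gronwall are not needed.
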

Our next set of results concern{s} equation \eqref{eq1} with colored noise satisfying the conditions above. That is, we consider
 \begin{equation}\label{frlapd}
 \begin{cases}
\partial_t u_t (x)= \mathcal{L}u_t(x)+ \xi \sigma(u_t (x)) \dot{F}(t,x),  \ \ \  x\in B_R (0),\ \ t>0
\\
u_t(x)=0 ,\ \ \ x\in {B_R(0)^c}.
\end{cases}
 \end{equation}
\\
  \begin{theorem}\label{gw-de-d}
    Assume that $u_t$ is the unique solution to equation \eqref{frlapd}. Then there exists $\xi_2>0$ such that for all $\xi<\xi_2$ and $x\in B_R(0)$
    \begin{align*}
    -\infty<\limsup\limits_{t\to\infty}\frac{1}{t}\log \E|u_t(x)|^2<0.
    \end{align*}
Fix $\varepsilon>0$, then there exists $\xi_3>0$ such that for all $\xi>\xi_3$ and $x\in B_{R-\varepsilon}(0)$,
    \begin{align*}
    0<\liminf_{t\to\infty}\frac{1}{t}\log \E|u_t (x)|^2 <\infty.
    \end{align*}
    \end{theorem}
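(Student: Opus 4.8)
The plan is to mirror the proof of Theorem~\ref{grw-dec1}, the only genuine change being that Walsh's isometry for the colored noise $\dot F$ produces a \emph{double} spatial integral against the correlation $f$ in place of the single integral of the white-noise case. Throughout I will use the spectral decomposition $p_D(r,x,y)=\sum_{n\ge1}e^{-\lambda_n r}\phi_n(x)\phi_n(y)$ of the heat kernel of the killed stable process ($0<\lambda_1<\lambda_2\le\cdots$, principal eigenfunction $\phi_1>0$), the fact that this process is intrinsically ultracontractive so that $p_D(r,x,y)\asymp e^{-\lambda_1 r}\phi_1(x)\phi_1(y)$ with constants uniform for $r$ in any fixed compact subinterval of $(0,\infty)$, and the fact that $u_t\ge0$ a.s.\ (comparison principle: $u_0\ge0$ and, by Assumption~\ref{cond}, $\sigma(0)=0$ with $\sigma$ of constant sign on $(0,\infty)$); note also $\langle u_0,\phi_1\rangle>0$.

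For the decay statement I would apply the isometry to \eqref{mild}, bound $|\sigma(x)|\le L_\sigma|x|$ and $f(y,z)\le\tilde f(y-z)$, and use Cauchy--Schwarz to arrive at the renewal inequality $N(t)\le C_0 e^{-2\lambda_1 t}+\xi^2 L_\sigma^2\int_0^t K(t-s)N(s)\,\d s$ for $N(t):=\sup_{x\in B_R(0)}\E|u_t(x)|^2$, where $K(r):=\sup_x\int_{B_R(0)}\int_{B_R(0)}p_D(r,x,y)p_D(r,x,z)\tilde f(y-z)\,\d y\,\d z$ and the first term comes from $|(\mathcal{G}_{D}u_0)_t(x)|^2\le C_0 e^{-2\lambda_1 t}$. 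The crucial estimate is $\int_0^\infty e^{\beta r}K(r)\,\d r<\infty$ for small $\beta>0$: near $r=0$, dominating $p_D$ by the free stable kernel and using Plancherel gives $\int_0^1 K(r)\,\d r\lesssim\int_{\R^d}\hat{\tilde f}(\zeta)(1+|\zeta|^\alpha)^{-1}\,\d\zeta<\infty$, which is exactly where the Dalang condition enters, while for $r\ge1$ one has $K(r)\le\big(\int_{|w|<2R}\tilde f\big)\cdot\sup_x\|p_D(r,x,\cdot)\|_\infty\cdot\sup_x\int_{B_R(0)}p_D(r,x,y)\,\d y\le C e^{-2\lambda_1 r}$ by intrinsic ultracontractivity. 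Multiplying the renewal inequality by $e^{\beta t}$ with $\beta\in(0,2\lambda_1)$ and truncating in time (finiteness being guaranteed a priori by the standard moment bound $N(t)\le Ce^{ct}$) then yields $\E|u_t(x)|^2\le M e^{-\beta t}$ as soon as $\xi^2 L_\sigma^2\int_0^\infty e^{\beta r}K(r)\,\d r<1$, which determines $\xi_2$; and $\limsup_t t^{-1}\log\E|u_t(x)|^2>-\infty$ follows from $\E|u_t(x)|^2\ge(\E u_t(x))^2=|(\mathcal{G}_{D}u_0)_t(x)|^2\ge c\,\phi_1(x)^2 e^{-2\lambda_1 t}$.

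For the growth statement I would project \eqref{mild} onto $\phi_1$. With $X_t:=\langle u_t,\phi_1\rangle$, $Y_t:=e^{\lambda_1 t}X_t$ and $P_r\phi_1=e^{-\lambda_1 r}\phi_1$, we get $Y_t=\langle u_0,\phi_1\rangle+\xi\int_0^t\int e^{\lambda_1 s}\phi_1(y)\sigma(u_s(y))\,F(\d s,\d y)$; the isometry together with $f\ge K_R$ (Assumption~\ref{cond-f}), $\sigma(u_s(y))\ge l_\sigma u_s(y)\ge0$, and $e^{2\lambda_1 s}\E X_s^2=\E Y_s^2$ gives the \emph{closed} inequality $\E Y_t^2\ge\langle u_0,\phi_1\rangle^2+\xi^2 l_\sigma^2 K_R\int_0^t\E Y_s^2\,\d s$, so a Gronwall argument yields $\E X_t^2\ge\langle u_0,\phi_1\rangle^2 e^{\gamma t}$ with $\gamma:=\xi^2 l_\sigma^2 K_R-2\lambda_1$, which is positive once $\xi>\xi_3:=\sqrt{2\lambda_1/(l_\sigma^2 K_R)}$. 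To upgrade this to a pointwise bound on $B_{R-\varepsilon}(0)$ I would fix $r_0$ with $p_D(r,x,y)\ge c_1\phi_1(x)\phi_1(y)$ for $r\in[r_0,r_0+1]$, $x,y\in B_R(0)$; then for $x\in B_{R-\varepsilon}(0)$ and $t\ge r_0+1$, retaining only $s\in[t-r_0-1,t-r_0]$ in the isometry identity for $\E|u_t(x)|^2$ and using $u_s\ge0$ together with $P_{t-s}u_s(x)\ge c_1\phi_1(x)X_s$, we get $\E|u_t(x)|^2\ge\xi^2 l_\sigma^2 K_R c_1^2\big(\inf_{B_{R-\varepsilon}(0)}\phi_1\big)^2\int_{t-r_0-1}^{t-r_0}\E X_s^2\,\d s\ge C_\varepsilon e^{\gamma t}$, hence $\liminf_t t^{-1}\log\E|u_t(x)|^2\ge\gamma>0$; the matching $\limsup<\infty$ is again the a priori exponential moment bound.

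I expect the main obstacle to be this last ``spreading'' step: converting the growth of the single scalar $\E\langle u_t,\phi_1\rangle^2$ into a genuinely pointwise lower bound for $\E|u_t(x)|^2$ is exactly where the two-sided intrinsic-ultracontractivity estimate $p_D(r,x,y)\asymp e^{-\lambda_1 r}\phi_1(x)\phi_1(y)$ and the positivity $u\ge0$ are indispensable, and it is also what forces the restriction to $B_{R-\varepsilon}(0)$ (the kernel lower bound degenerates at $\partial B_R(0)$ because $\phi_1$ does). The other point requiring care is the integrability of $K$ near $r=0$, where the hypothesis $\int_{\R^d}\hat{\tilde f}(\zeta)(1+|\zeta|^\alpha)^{-1}\,\d\zeta<\infty$ is genuinely used, and checking that intrinsic ultracontractivity is available for the killed $\alpha$-stable process on a ball for every $0<\alpha\le2$.
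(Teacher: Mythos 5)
Your argument for the decay half is essentially the paper's: the same Walsh isometry with the double spatial integral against $f$, the same splitting into $|(\mathcal{G}_Du_0)_t(x)|^2$ plus a convolution term, and the same requirement that $\xi^2L_\sigma^2\int_0^\infty e^{\beta r}K(r)\,\d r<1$ for some $\beta\in(0,2\mu_1)$ --- your $\int_0^\infty e^{\beta r}K(r)\,\d r<\infty$ is exactly the content of Lemma \ref{lem-cor}, and your Plancherel computation near $r=0$ is the ``justification'' the paper delegates to the references. The growth half, however, is genuinely different. The paper works with the Laplace transform $J_\beta=\int_0^\infty e^{-\beta t}\inf_{x,y\in B_{R-\varepsilon}(0)}\E|u_t(x)u_t(y)|\,\d t$, derives from Lemma \ref{lmd2} a self-improving inequality $J_\beta\geq c+c'\xi^2 l_\sigma^2 K_R J_\beta$, and concludes $J_\beta=\infty$ for $\xi$ large; you instead project the mild equation onto the principal eigenfunction, close a Gronwall inequality for $\E\langle u_t,\phi_1\rangle^2$, and then spread back to a pointwise bound on $B_{R-\varepsilon}(0)$ via the two-sided intrinsic-ultracontractivity estimate (which is what \eqref{pd2} encodes, with $\mu_1=\lambda_1$ and the $\phi_1$ factors absorbed into constants on $B_{R-\varepsilon}(0)$). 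Your route buys an explicit threshold $\xi_3=\sqrt{2\lambda_1/(l_\sigma^2K_R)}$, an explicit growth rate $\gamma$, and --- importantly --- a genuine pointwise-in-$t$ lower bound $\E|u_t(x)|^2\geq Ce^{\gamma t}$, which yields the $\liminf$ directly; the paper's step from ``$J_\beta=\infty$ for all admissible $\beta$'' to a statement about the $\liminf$ (rather than the $\limsup$) is more delicate than the text acknowledges, and your argument sidesteps it. The price is that you lean on two inputs the paper uses only implicitly: the a.s.\ nonnegativity of $u_t$ (a comparison-principle fact, needed both for $\int p_D(r,x,y)u_s(y)\,\d y\geq c\phi_1(x)\langle u_s,\phi_1\rangle$ and for $\sigma(a)\sigma(b)\geq l_\sigma^2ab$, the latter also requiring the observation that Assumption \ref{cond} forces $\sigma$ to have constant sign on $(0,\infty)$), and the stochastic Fubini theorem to write the equation for $\langle u_t,\phi_1\rangle$; both are standard here, but if you were writing this up you should state and reference them, since the paper's own lower-bound argument is arranged precisely so as to work with $\E|u_s(y_1)u_s(y_2)|$ and avoid an explicit appeal to positivity.
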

We then have the following easy consequence.
   \begin{corollary}\label{energd}
   Let $\xi_2$ and $\xi_3$ be as in Theorem \ref{gw-de-d}, then
   \begin{align*}
   -\infty<\limsup\limits_{t\to\infty}\frac{1}{t}\log\mathcal{E}_t(\xi)<0 \ \ \ \text{for all}\ \ \xi<{\xi_2}
   \end{align*}
   and
   \begin{align*}
   0<\liminf\limits_{t\to\infty}\frac{1}{t}\log\mathcal{E}_t(\xi)<\infty\ \ \ \text{for all}\ \ \xi>{\xi_3}.
   \end{align*}
   \end{corollary}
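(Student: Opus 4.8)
The plan is to deduce Corollary \ref{energd} directly from Theorem \ref{gw-de-d} by integrating the pointwise second-moment bounds over the ball, being careful to exploit the uniformity (or lack thereof) of the exponential rates in the spatial variable. Recall that $\mathcal{E}_t(\xi)^2 = \int_{B_R(0)} \E|u_t(x)|^2\,\d x$.

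For the decay statement (the regime $\xi<\xi_2$), the upper bound is immediate: by Theorem \ref{gw-de-d} and a routine examination of its proof, one has $\E|u_t(x)|^2 \le C e^{-\gamma t}$ for some $\gamma>0$ uniformly in $x\in B_R(0)$ (the constant and rate coming from the semigroup/heat-kernel estimates are spatially uniform); integrating over the ball of finite Lebesgue measure gives $\mathcal{E}_t(\xi)^2 \le C|B_R(0)| e^{-\gamma t}$, hence $\limsup_{t\to\infty}\frac1t\log\mathcal{E}_t(\xi) \le -\gamma/2 < 0$. For the matching lower bound $\limsup_{t\to\infty}\frac1t\log\mathcal{E}_t(\xi) > -\infty$, one simply notes $\mathcal{E}_t(\xi)^2 \ge \int_{B_R(0)}\E|u_t(x)|^2\,\d x \ge \big(\int_{B_R(0)} (\mathcal{G}_D u_0)_t(x)\,\d x\big)^2/|B_R(0)|$ by Cauchy--Schwarz together with the fact that the stochastic integral has mean zero (so $\E|u_t(x)|^2 \ge |(\mathcal{G}_D u_0)_t(x)|^2$), and the deterministic term $\mathcal{G}_D u_0$ decays at most exponentially because the killed-stable semigroup decays like $e^{-\lambda_1 t}$ where $\lambda_1$ is the principal Dirichlet eigenvalue; this forces $\liminf_{t\to\infty}\frac1t\log\mathcal{E}_t(\xi)\ge -\lambda_1 > -\infty$.

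For the growth statement (the regime $\xi>\xi_3$), the lower bound is the substantive half: Theorem \ref{gw-de-d} gives, for each fixed $\varepsilon>0$, a rate $\beta_\varepsilon>0$ and constant $c_\varepsilon>0$ with $\E|u_t(x)|^2 \ge c_\varepsilon e^{\beta_\varepsilon t}$ for all $x\in B_{R-\varepsilon}(0)$ and all large $t$. Restricting the integral defining $\mathcal{E}_t(\xi)^2$ to the smaller ball, $\mathcal{E}_t(\xi)^2 \ge \int_{B_{R-\varepsilon}(0)}\E|u_t(x)|^2\,\d x \ge c_\varepsilon |B_{R-\varepsilon}(0)| e^{\beta_\varepsilon t}$, whence $\liminf_{t\to\infty}\frac1t\log\mathcal{E}_t(\xi) \ge \beta_\varepsilon/2 > 0$. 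For the upper bound $\liminf_{t\to\infty}\frac1t\log\mathcal{E}_t(\xi) < \infty$, one invokes the standard a priori moment estimate established in the course of proving well-posedness of \eqref{frlapd}: there is a constant $C=C(\xi)$ such that $\sup_{x\in B_R(0)}\E|u_t(x)|^2 \le C e^{Ct}$ for all $t\ge 0$ (Gronwall applied to the mild formulation with the Dalang condition controlling the spatial correlation), and integrating over the ball gives $\mathcal{E}_t(\xi)^2 \le C|B_R(0)| e^{Ct}$, so the $\liminf$ is at most $C/2<\infty$.

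The main obstacle is purely bookkeeping rather than conceptual: one must make sure that the constants and exponential rates extracted from the proof of Theorem \ref{gw-de-d} are genuinely uniform over the relevant ball (uniform over $B_R(0)$ for the decay regime, uniform over $B_{R-\varepsilon}(0)$ for the growth regime), since the definition of $\mathcal{E}_t(\xi)$ integrates the pointwise bounds; this is why the growth lower bound is naturally stated only on the strictly smaller ball $B_{R-\varepsilon}(0)$, which nonetheless has positive measure and hence suffices. All the remaining ingredients --- Cauchy--Schwarz, finiteness of $|B_R(0)|$, mean-zero property of the Walsh integral, and the a priori moment bound --- are routine, so the corollary follows with essentially no new work beyond Theorem \ref{gw-de-d}.
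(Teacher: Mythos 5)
Your proposal is correct and follows essentially the same route as the paper: the corollary is deduced from the pointwise second-moment bounds of Theorem \ref{gw-de-d} by sandwiching $\int_{B_R(0)}\E|u_t(x)|^2\,\d x$ between $|B_{R-\varepsilon}(0)|\inf_{x\in B_{R-\varepsilon}(0)}\E|u_t(x)|^2$ and $|B_R(0)|\sup_{x\in B_R(0)}\E|u_t(x)|^2$, which is exactly the estimate the paper uses for Corollary \ref{energ1} and then reuses here. Your explicit attention to the uniformity in $x$ of the rates (which the proof of the theorem supplies via the supremum and infimum formulations) is the right bookkeeping point, and the remaining steps (Cauchy--Schwarz, the a priori Gronwall moment bound) match the paper's intent.
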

We end this introduction with a plan of the article. In section 2, we provide some estimates needed for the proof{s} of our main results which are presented in section 3. Finally section 4 contains some extensions of our main results to higher  moments and to  some other non-local operators instead  of the  fractional Laplacian. { Throughout this paper, the letter c with or without subscript(s) will denote a constant whose value is not important and can vary from place to place.}
\section{Some estimates}
We begin this section with some estimates on heat kernel of the Dirichlet fractional Laplacian. For more information on these, see \cite{Bogdan} and references therein.
\begin{itemize}
\item \begin{equation}\label{pd0}
p_D (t,x,y)\leq c_1(t^{-\frac{d}{\alpha}}\wedge \frac{t}{|x-y|^{d+\alpha}}).
\end{equation}
We will often use the above inequality in the form of $p_D (t,x,y)\leq c_1p (t,x-y)$, where $p_t(\cdot)$ is the heat kernel of the (unkilled) stable process.
 \item Fix $\epsilon >0$ and let $x,\,y\in B_{R-\epsilon}(0)$, then for all $t\leq \epsilon^\alpha$,
 \begin{equation}\label{pd1}
 p_D (t,x,y)\geq c_2(t^{-\frac{d}{\alpha}}\wedge \frac{t}{|x-y|^{d+\alpha}}).
\end{equation}
\item There exist $t_0>0$ {and $\mu_1>0$} such that,
 \begin{equation}\label{pd2}
c_1 e^{-\mu_1 t} \leq
p_D (t,x,y) \leq c_2 e^{-\mu_1 t}{\quad \text{for}\quad t\geq t_0}.
\end{equation}
The upper bound is valid for any $x,\,y\in B_R(0)$ while the lower bound is valid for $x,\,y\in B_{R-\epsilon}(0)$ with $\epsilon>0$.
\end{itemize}
Our first lemma will be important for the white noise driven equation. The spatial dimension is restricted to $d=1$.
\begin{lemma}\label{lm2} There exists a constant $K_{\beta, \mu_1,\alpha}$ depending only on  $\beta$, $\mu_1$ and $\alpha$ such that for all $\beta \in (0,\mu_1)$ and $x\in B_R(0)$, we have
\begin{align*}
\int_{0}^{\infty} e^{\beta t} p_D (t,x,x)\d t \leq K_{\beta, \mu_1,\alpha}.
\end{align*}
\end{lemma}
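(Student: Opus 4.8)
The plan is to split the time integral at the threshold $t_0$ from the heat-kernel estimate \eqref{pd2} and bound each piece separately. On the tail $[t_0,\infty)$ the exponential decay of the kernel does all the work: by the upper bound in \eqref{pd2}, $p_D(t,x,x)\le c_2 e^{-\mu_1 t}$ for $t\ge t_0$, so
\[
\int_{t_0}^\infty e^{\beta t} p_D(t,x,x)\,\d t \le c_2\int_{t_0}^\infty e^{-(\mu_1-\beta)t}\,\d t = \frac{c_2}{\mu_1-\beta}\,e^{-(\mu_1-\beta)t_0},
\]
which is finite precisely because $\beta<\mu_1$, and the bound depends only on $\beta$, $\mu_1$, $\alpha$ (and the universal constants $c_2$, $t_0$ coming from $\alpha$).

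For the short-time part $[0,t_0]$, the factor $e^{\beta t}\le e^{\beta t_0}$ is harmless, so it suffices to show $\int_0^{t_0} p_D(t,x,x)\,\d t<\infty$. Here I would use \eqref{pd0} with $d=1$, giving $p_D(t,x,x)\le c_1 t^{-1/\alpha}$ (taking the first term of the minimum, or more crudely $p_D(t,x,x)\le c_1 p(t,0)\le c_1 t^{-1/\alpha}$). Since $d=1$ and $0<\alpha\le 2$ forces $1/\alpha\ge 1/2$... but wait, that would make the integral $\int_0^{t_0} t^{-1/\alpha}\,\d t$ divergent when $\alpha\le 1$. The relevant regime for the white-noise equation is $1<\alpha<2$ (as stated in the existence-uniqueness discussion), so $1/\alpha\in(1/2,1)$ and $\int_0^{t_0} t^{-1/\alpha}\,\d t = \frac{\alpha}{\alpha-1}t_0^{1-1/\alpha}<\infty$. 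Combining the two pieces gives the claimed constant $K_{\beta,\mu_1,\alpha}$.

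The only real subtlety — and the step I would be most careful about — is the integrability at $t=0$, which is exactly where the restriction to $d=1$ and $1<\alpha<2$ enters: the on-diagonal kernel blows up like $t^{-1/\alpha}$, and this is integrable near zero if and only if $1/\alpha<1$. Everything else is routine. One should also note the uniformity in $x$: both \eqref{pd0} and \eqref{pd2} hold with constants independent of $x\in B_R(0)$ (the upper bound in \eqref{pd2} is stated to be valid for all $x,y\in B_R(0)$), so the final bound $K_{\beta,\mu_1,\alpha}$ is genuinely independent of $x$, as asserted.
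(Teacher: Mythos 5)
Your proof is correct and follows essentially the same route as the paper: split at $t_0$, bound the short-time piece via \eqref{pd0} with $d=1$ (on-diagonal bound $c\,t^{-1/\alpha}$), and the tail via \eqref{pd2} using $\beta<\mu_1$. You are also right to flag that integrability near $t=0$ requires $1<\alpha<2$ (so that $1/\alpha<1$); the paper's proof passes over this point silently, and your explicit identification of where that restriction enters is the one place your write-up is more careful than the original.
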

\begin{proof}
We begin by writing
\begin{align*}
\int_{0}^{\infty}e^{\beta t}&p_D(t,x,x) \d t= \int_{0}^{t_0}e^{\beta t}p_D(t,x,x) \d t + \int_{t_0}^{\infty}e^{\beta t}p_D(t,x,x) \d t,
\end{align*}
where $t_0$ is as in \eqref{pd2}. Now using \eqref{pd0}, we have
\begin{align*}
\int_{0}^{t_0}e^{\beta t}p_D(t,x,x) \d t&\leq c_3\int_{0}^{t_0} e^{\beta t} t^{-\frac{1}{\alpha}} \d t,
\end{align*}
{where we have used} the fact that $d=1$. It is now clear that the { above} integral {has an upper bound depending on $\beta$} . Since $\beta <\mu_1$, we can use \eqref{pd2} to write
\begin{align*}
\int_{t_0}^{\infty}e^{\beta t}&p_D(t,x,x) \d t\\
&\leq c_5\int_{to}^{\infty} e^{-(\mu_1-\beta)t} \d t\\
&\leq \frac{c_6}{\mu_1-\beta}.
\end{align*}
Combining the estimates, we obtain the result.
\end{proof}

 \begin{lemma}\label{lmd1}
Let  $\beta\in(0,\mu_1)$ and $x\in B_R(0)$.{ Then there exists} a constant $c_{R,\,\alpha}$ depending on $R$ and $\alpha$ such that for all $t>0$
 \begin{align*}
 \int_{B_R(0)} e^{\beta t}p_D (t,x,y) \d y \leq c_{R,\,\alpha}.
 \end{align*}
  \end{lemma}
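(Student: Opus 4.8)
The plan is to split the time integral at the fixed threshold $t_0$ from \eqref{pd2} and estimate the two pieces separately, exactly as in the proof of Lemma \ref{lm2}, but now keeping track of the spatial integration variable $y$ as well. Write
\begin{align*}
\int_{B_R(0)} e^{\beta t}p_D(t,x,y)\,\d y
= e^{\beta t}\int_{B_R(0)} p_D(t,x,y)\,\d y,
\end{align*}
and treat $t\leq t_0$ and $t>t_0$ in turn. The key observation is that the exponential factor $e^{\beta t}$ is harmless on the compact interval $[0,t_0]$ and is controlled by the Gaussian-type decay from \eqref{pd2} on $[t_0,\infty)$.

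For the small-time regime $t\le t_0$, I would use the sub-probability property of the Dirichlet heat kernel: since $p_D(t,x,y)\leq p(t,x-y)$ (the heat kernel of the free stable process) by \eqref{pd0}, and $p(t,\cdot)$ integrates to at most $1$ over $\R^d$, we get
\begin{align*}
e^{\beta t}\int_{B_R(0)} p_D(t,x,y)\,\d y \leq e^{\beta t_0}\int_{\R^d} p(t,x-y)\,\d y \leq e^{\beta t_0},
\end{align*}
which is a constant depending only on $\beta$ and $t_0$ (hence, through $t_0$, on $R$ and $\alpha$). For the large-time regime $t\ge t_0$, I would invoke the upper bound in \eqref{pd2}, valid for all $x,y\in B_R(0)$, to obtain $p_D(t,x,y)\leq c_2 e^{-\mu_1 t}$, so that
\begin{align*}
e^{\beta t}\int_{B_R(0)} p_D(t,x,y)\,\d y \leq c_2\,|B_R(0)|\, e^{-(\mu_1-\beta)t} \leq c_2\,|B_R(0)|\, e^{-(\mu_1-\beta)t_0},
\end{align*}
using $\beta<\mu_1$, and this is again a finite constant depending on $R$, $\alpha$ (via $t_0$, $\mu_1$, $c_2$) and $\beta$. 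Taking the maximum of the two bounds over the respective ranges of $t$ gives a single constant $c_{R,\alpha}$ that works for all $t>0$.

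Honestly, this lemma is essentially a routine consequence of the heat-kernel estimates already recorded, so there is no serious obstacle; the only mild point of care is that the statement claims a bound uniform in $t>0$, so one must make sure the small-$t$ and large-$t$ estimates are each uniform in their range (the first because $\beta t_0$ is a fixed number and the free-kernel mass is at most $1$, the second because $e^{-(\mu_1-\beta)t}$ is decreasing and so maximized at $t=t_0$). One should also note that the dimension is not restricted here — unlike Lemma \ref{lm2}, the diagonal singularity $p_D(t,x,x)\sim t^{-d/\alpha}$ never appears because we integrate in $y$ first and pick up only the total mass — so the argument goes through for any $d$. Assembling these observations yields the claim.
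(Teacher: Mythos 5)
Your proof is correct and follows essentially the same route as the paper: split at the threshold $t_0$ from \eqref{pd2}, bound the small-time piece by the total mass of the free stable kernel (giving $e^{\beta t_0}$), and bound the large-time piece using the exponential decay $p_D(t,x,y)\leq c_2 e^{-\mu_1 t}$ together with $\beta<\mu_1$. Your version is in fact slightly more careful than the paper's, since you explicitly record the factor $|B_R(0)|$ in the large-time estimate.
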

  \begin{proof}
Fix $t_0$ as in \eqref{pd2}.  For $0<t<t_0$, we have
    \begin{align*}
    \int_{B_R(0)} e^{\beta t} &p_D(t,x,y)\d y\\
   &\leq e^{\beta t}\int_{\mathbb{R}^d}p(t,x,y)\d y\\
    &\leq e^{\beta t_0},
    \end{align*}
    and for $t>t_0$ we use \eqref{pd2} to get
    \begin{align*}
    \int_{B_R(0)} e^{\beta t}& p_D(t,x,y)\d y\\
    &\leq c_2 e^{-(\mu_1-\beta)t_0}.
    \end{align*}
 The result  now easily follows from the  two inequalities { above}.
  \end{proof}

  \begin{lemma}\label{lem-cor}
  Let $\beta \in (0,\,2\mu_1)$. Then there exists a constant $c_{\beta,\mu_1}$ depending on $\beta$ and $\mu_1$ such that
  \begin{equation}
  \int_0^\infty e^{\beta t}\int_{B_R(0)\times B_R(0)}p_D(t,x_1,y_1)p_D(t,x_2,y_2)f(y_1,y_1) \d t\,\d y_1\,\d y_2\leq c_{\beta, \mu_1},
  \end{equation}
  for all $x_1,\, x_2,\  y_1, \ y_2 \in B_R(0)$.
  \end{lemma}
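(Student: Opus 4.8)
The plan is to reduce the double-space integral to a product of one-dimensional integrals by peeling off the correlation function using Assumption~\ref{cond-f} in reverse, or rather by bounding $f$ from above. Since $f$ is a correlation function and by hypothesis $f(y_1,y_2)\le \tilde f(y_1-y_2)$ with $\tilde f$ locally integrable and continuous on $\R^d\setminus\{0\}$, and $B_R(0)$ is bounded, we have $\sup_{y_1,y_2\in B_R(0)} \int_{B_R(0)} \tilde f(y_1-y_2)\,\d y_2 \le \int_{B_{2R}(0)}\tilde f(z)\,\d z =: C_f <\infty$. (Note the statement writes $f(y_1,y_1)$, which I read as a typo for $f(y_1,y_2)$.) So the first step is to write the triple integral, integrate in $y_2$ first against $p_D(t,x_2,y_2)$, and bound $f(y_1,y_2)$ by $\tilde f(y_1-y_2)$; but that still leaves a convolution. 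A cleaner route: bound $f(y_1,y_2)\le \|\tilde f\|$ is false since $\tilde f$ need not be bounded. Instead I would split into the region where $|y_1-y_2|$ is small and where it is not.

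Here is the cleaner execution. Fix $\beta\in(0,2\mu_1)$ and choose $\beta' \in (\beta/2,\mu_1)$, so that $2\beta' > \beta$. Write $e^{\beta t} = e^{2\beta' t} e^{-(2\beta'-\beta)t}$. Using the upper heat-kernel bound $p_D(t,x_i,y_i)\le c_1 p(t,x_i-y_i)$ from \eqref{pd0}, where $p$ is the free stable kernel, the spatial integral $\int_{B_R(0)\times B_R(0)} p_D(t,x_1,y_1)p_D(t,x_2,y_2) f(y_1,y_2)\,\d y_1\,\d y_2$ is at most
\[
c_1^2\int_{B_R(0)\times B_R(0)} p(t,x_1-y_1)\,p(t,x_2-y_2)\,\tilde f(y_1-y_2)\,\d y_1\,\d y_2.
\]
Now I use the standard Fourier/Plancherel estimate for spatially correlated noise: since $\tilde f$ satisfies the Dalang condition, one has $\int_{\R^d\times\R^d} p(t,a-y_1)p(t,b-y_2)\tilde f(y_1-y_2)\,\d y_1\,\d y_2 \le \int_{\R^d} e^{-c|\zeta|^\alpha t}\,\hat{\tilde f}(\zeta)\,\d\zeta$ uniformly in $a,b$ (the cross term only contributes a phase of modulus one), and this is $\le \int_{\R^d}\frac{\hat{\tilde f}(\zeta)}{1+|\zeta|^\alpha}\cdot(1+|\zeta|^\alpha)e^{-c|\zeta|^\alpha t}\,\d\zeta$. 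Alternatively, and more in the spirit of the earlier lemmas, I would avoid Fourier analysis entirely: for $t\ge t_0$ use \eqref{pd2} to get $p_D(t,x_i,y_i)\le c_2 e^{-\mu_1 t}$, so the spatial integral is bounded by $c_2^2 e^{-2\mu_1 t}\int_{B_R(0)\times B_R(0)}\tilde f(y_1-y_2)\,\d y_1\,\d y_2 \le c_2^2 e^{-2\mu_1 t}\,|B_R(0)|\,C_f$, and then $\int_{t_0}^\infty e^{\beta t}e^{-2\mu_1 t}\,\d t = \frac{e^{-(2\mu_1-\beta)t_0}}{2\mu_1-\beta}<\infty$ since $\beta<2\mu_1$.

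For the remaining piece $\int_0^{t_0}$, the exponential $e^{\beta t}\le e^{\beta t_0}$ is harmless, and what must be controlled is $\int_0^{t_0}\int_{B_R(0)\times B_R(0)} p(t,x_1-y_1)p(t,x_2-y_2)\tilde f(y_1-y_2)\,\d y_1\,\d y_2\,\d t$. Here I would drop one of the stable kernels using $\int_{\R^d}p(t,x_1-y_1)\,\d y_1 = 1$ after first bounding $p(t,x_1-y_1)\le \sup\ldots$; but that fails since the free kernel has no useful uniform bound near $t=0$. Instead, integrate $y_1$ first against $p(t,x_1-y_1)$ keeping $\tilde f$: $\int_{\R^d} p(t,x_1-y_1)\tilde f(y_1-y_2)\,\d y_1 = (p(t,\cdot)*\tilde f)(x_1-y_2)$. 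This is exactly the object that the Dalang condition controls: $\sup_{z}(p(t,\cdot)*\tilde f)(z) \le \int_{\R^d}e^{-c|\zeta|^\alpha t}\hat{\tilde f}(\zeta)\,\d\zeta$, and $\int_0^{t_0}\int_{\R^d}e^{-c|\zeta|^\alpha t}\hat{\tilde f}(\zeta)\,\d\zeta\,\d t \le \int_{\R^d}\frac{\hat{\tilde f}(\zeta)}{c|\zeta|^\alpha}(1-e^{-c|\zeta|^\alpha t_0})\,\d\zeta$, which is finite by splitting $|\zeta|\le 1$ (use $1-e^{-x}\le x$) and $|\zeta|>1$ (use the Dalang integrability). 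Then integrating out $y_2$ against $p(t,x_2-y_2)$ costs only a factor $1$. The main obstacle is precisely this small-time estimate: one must invoke the Dalang-type condition rather than just Assumption~\ref{cond-f}, and be careful that the free stable kernel is not integrable to a bounded function near $t=0$, so the order of integration and the Fourier argument are essential; everything for $t\ge t_0$ is routine exponential decay as in Lemmas~\ref{lm2} and \ref{lmd1}.
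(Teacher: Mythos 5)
Your proof is correct and follows essentially the same route as the paper: split the time integral at $t_0$, use the exponential bound \eqref{pd2} for $t\ge t_0$ together with local integrability of $\tilde f$, and for $t\le t_0$ reduce to the free stable kernel via \eqref{pd0} and control the resulting convolution with the Dalang condition (a step the paper simply defers to \cite{FK2}, and which you correctly spell out via the Fourier argument). Your reading of $f(y_1,y_1)$ as a typo for $f(y_1,y_2)$ is also the right one.
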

  \begin{proof}
  {We again  use \eqref{pd2} so we fix $t_0$ accordingly}.  We begin by splitting the integral as follows,
  \begin{align*}
  \int_0^\infty e^{\beta t}&\int_{B_R(0)\times B_R(0)}p_D(t,x_1,y_1)p_D(t,x_2,y_2)f(y_1,y_1) \d t\,\d y_1\,\d y_2 \\
  &=\int_0^{t_0} e^{\beta t}\int_{B_R(0)\times B_R(0)}p_D(t,x_1,y_1)p_D(t,x_2,y_2) f(y_1,y_1) \d t\,\d y_1\,\d y_2\\
  &+\int_{t_0}^\infty e^{\beta t}\int_{B_R(0)\times B_R(0)}p_D(t,x_1,y_1)p_D(t,x_2,y_2) f(y_1,y_1) \d t\,\d y_1\,\d y_2\\
  &:=I_1+I_2.
  \end{align*}
 $I_1$ can be bounded as follows{:  we  use }\eqref{pd0} to obtain
  \begin{align*}
  I_1&\leq e^{\beta t_0}\int_0^{t_0}e^{-\beta t}e^{\beta t}\int_{B_R(0)\times B_R(0)}p_D(t,x_1,y_1)p_D(t,x_2,y_2) f(y_1,y_1) \d t\,\d y_1\,\d y_2\\
 &\leq e^{2\beta t_0}\int_0^\infty e^{-\beta t}\int_{{\mathbb{R}^d\times \mathbb{R}^d}}p(t,x_1,y_1)p(t,x_2,y_2) \tilde{f}(y_1-y_1) \d t\,\d y_1\,\d y_2\\
 &\leq c_1e^{2\beta t_0}.
  \end{align*}
  The last inequality needs some justifications which are quite straightforward under the current conditions; see \cite{FK2} for details. For $I_2$, we use \eqref{pd2} to write
  \begin{align*}
  I_2\leq &\int_{t_0}^\infty e^{\beta t}\sup_{y_1,y_2\in B_R(0)}p_D(t,x_1,y_1)p_D(t,x_2,y_2){\ \d t}\int_{B_R(0)\times B_R(0)} f(y_1-y_1)\,\d y_1\,\d y_2\\
  &\leq c_2\int_0^\infty e^{-(2\mu_1-\beta)t}\,\d t.
  \end{align*}
  Combining the above estimates yield{s} the result.
  \end{proof}
 \begin{lemma}\label{lmd2}
Fix $\varepsilon>0$. Then, there exist $t_0>0$ and a constant $c_{\beta, \mu_1, t_0}$ such that for all $\beta>0$,
 \begin{align*}
 \int_{0}^{\infty} e^{-\beta t}p_D(t,x_1,y_1)p_D(t,x_2,y_2) \d t\geq c_{\beta, \mu_1, t_0},
 \end{align*}
 whenever $x_1,\, x_2,\  y_1, \ y_2 \in B_{R-\varepsilon}(0)$.  The constant $c_{\beta, \mu_1, t_0}$ depends on $\beta$, $\mu_1$ and $t_0$.
 \end{lemma}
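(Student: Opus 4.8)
The plan is to throw away most of the time axis and keep only a compact window on which the Dirichlet heat kernel is bounded below by a strictly positive constant. Fix $t_0$ as in \eqref{pd2}. Since all four points $x_1,x_2,y_1,y_2$ lie in $B_{R-\varepsilon}(0)$, the lower bound in \eqref{pd2} applies: there is a constant $c_2>0$ and $\mu_1>0$ with $p_D(t,x_i,y_i)\geq c_1 e^{-\mu_1 t}$ for $t\geq t_0$. I would not even need the sharp lower bound \eqref{pd1}; the crude exponential lower bound on $[t_0, 2t_0]$ (say) is enough. Restricting the integral to that finite window gives
\begin{align*}
\int_0^\infty e^{-\beta t} p_D(t,x_1,y_1)p_D(t,x_2,y_2)\,\d t
&\geq \int_{t_0}^{2t_0} e^{-\beta t}\, c_1^2 e^{-2\mu_1 t}\,\d t\\
&\geq c_1^2 e^{-(\beta+2\mu_1)\cdot 2t_0}\, t_0,
\end{align*}
which is a strictly positive constant depending only on $\beta$, $\mu_1$ and $t_0$ (the constant $c_1$ from \eqref{pd2} being absorbed, as per the paper's convention on the letter $c$). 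This is exactly the claimed bound.

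The only point that needs a word of care is that the lower bound in \eqref{pd2} requires $x,y\in B_{R-\epsilon'}(0)$ for some $\epsilon'>0$; here we are handed the margin $\varepsilon$ in the statement, so one simply invokes \eqref{pd2} with $\epsilon'=\varepsilon$, and the associated $t_0$ and $\mu_1$ are the ones fixed at the outset. Everything else is a one-line integral estimate. There is essentially no obstacle: the lemma is a soft positivity statement, and the content is entirely in the lower heat-kernel bound \eqref{pd2}, which we are permitted to assume. If one wanted to avoid relying on the large-time bound and instead argue from \eqref{pd1}, one could restrict to $t\in(0,\varepsilon^\alpha]$ and use $p_D(t,x_i,y_i)\geq c_2\big(t^{-d/\alpha}\wedge t/|x_i-y_i|^{d+\alpha}\big)$, but this makes the dependence on the points less clean, so I would stick with the \eqref{pd2} route.
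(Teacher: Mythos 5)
Your proof is correct and follows essentially the same route as the paper: discard the initial segment $[0,t_0]$, invoke the lower bound in \eqref{pd2} for points in $B_{R-\varepsilon}(0)$, and integrate the resulting exponential. The only (immaterial) difference is that the paper integrates over the full tail $[t_0,\infty)$, obtaining $c_2 e^{-(\beta+2\mu_1)t_0}/(\beta+2\mu_1)$, whereas you keep the finite window $[t_0,2t_0]$.
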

 \begin{proof}
 Using \eqref{pd2}, we have
  \begin{align*}
  \int_{0}^{\infty} e^{-\beta t}&p_D(t,x_1,y_1)p_D(t,x_2,y_2) \d t\\
  &\geq \int_{t_0}^{\infty} e^{-\beta t}p_D(t,x_1,y_1)p_D(t,x_2,y_2) \d t \\
  &\geq  c_1 \int_{t_0}^{\infty} e^{-\beta t} e^{-2\mu_1 t} \d t\\
  &= \frac{c_2 e^{-(\beta +2\mu_1)t_0}}{\beta+2\mu_1}.
  \end{align*}
 \end{proof}

\section{Proofs of main results.}

 \subsection{Proof of Theorem \ref{grw-dec1}}
 \begin{proof}[Proof of Theorem \ref{grw-dec1}.]
 Using \eqref{mild}{} {and the Walsh isometry,} we have
\begin{equation}\label{mild-sq}
\E|u_t (x)|^2= |(\mathcal{G}_{D}u_{0})_t (x)|^2 +\xi^2 \int_{0}^{t}{\int_{B_R(0)}} {p_D}^2(t-s,x,y)\E|\sigma(u_s (y))|^2 \d y\d s.
\end{equation}
from which we obtain
\begin{equation}\label{jensen's^2}
\E|u_t (x)|^2 \geq |(\mathcal{G}_{D}u_{0})_t (x)|^2.
\end{equation}
Using the assumption on $u_0$, we have for $\epsilon>0$ small enough,
\begin{align*}
(\mathcal{G}_{D}u_{0})_t (x)&=\int_{B_R(0)}u_0(y)p_D(t,x,y)\,\d y\\
&\geq \int_{B_{R-\epsilon}(0)}u_0(y)p_D(t,x,y)\,\d y\\
&\geq c_1e^{-\mu_1t},
\end{align*}
whenever $t\geq t_0$ with $t_0$ as in \eqref{pd2} { and $x\in B_{R-\epsilon}(0)$}.
This immediately gives
\begin{align*}
 \liminf\limits_{t\to\infty }  \frac{1}{t} \log \E|u_t (x)|^2 > -\infty\quad { for\quad x\in B_{R-\epsilon}(0).}
\end{align*}
We now look at the upper bound. We will assume that $\beta\in (0,\,2\mu_1)$. From \eqref{mild-sq} and the assumption on $\sigma$,  we have
\begin{align*}
\E|u_t (x)|^2&\leq |(\mathcal{G}_{D}u_{0})_t (x)|^2 +\xi^2L_\sigma^2 \int_{0}^{t}\int_{B_R(0)} {p_D}^2(t-s,x,y)\E|u_s (y)|^2 \d y\d s\\
&:=I_1+I_2.
 \end{align*}
Using Lemma \ref{lmd1}, we have
\begin{align*}
I_1&\leq  c_2 e^{-\beta t} \left|\int_{B_R(0)} e^{\frac{\beta t}{2}} p_D (t,x,y)  \d y\right|^2 \\
&\leq  c_2 e^{-\beta t}.
\end{align*}
We { then} look at the second term $I_2$. {Using the semigroup property and Lemma \ref{lm2}, we have}
\begin{align*}
 I_2 &=  \xi^2 L_{\sigma}^2 e^{-\beta t} \int_{0}^{t}\int_{B_R(0)} e^{\beta (t-s)} p_D^2 (t-s, x,y)  e^{\beta s} \E|u_s (y)|^2 \d y\d s\\
 &\leq \xi^2 L_{\sigma}^2 e^{-\beta t} \sup_{t>0,\,x\in B_R(0)}e^{\beta t}\E|u_t (x)|^2 \int_{0}^{t}\int_{B_R(0)} e^{\beta (t-s)} p_D^2 (t-s, x,y)  \d y\d s\\
 &\leq \xi^2 L_{\sigma}^2 e^{-\beta t} \sup_{t>0,\,x\in B_R(0)}e^{\beta t}\E|u_t (x)|^2 \int_{0}^{t}{ e^{\beta s} p_D (2s, x,x) \d s}\\
&\leq K_{\beta, \mu_1,\alpha}\xi^2 L_{\sigma}^2 e^{-\beta t} \sup_{t>0,\,x\in B_R(0)}e^{\beta t}\E|u_t (x)|^2.
\end{align*}
Combining the above inequalities, we have
\begin{align*}
\sup_{t>0,\,x\in B_R(0)}e^{\beta t}\E|u_t (x)|^2\leq c_2+K_{\beta, \mu_1,\alpha}\xi^2 L_{\sigma}^2 \sup_{t>0,\,x\in B_R(0)}e^{\beta t}\E|u_t (x)|^2.
\end{align*}
We now choose $\xi_0$ such that for $\xi\leq \xi_0$, we have $K_{\beta, \mu_1,\alpha}\xi^2 L_{\sigma}^2<\frac{1}{2}$.
This immediately gives
\begin{align*}
\limsup\limits_{t \to \infty} \frac{1}{t} \log \E|u_t(x)|^2<0.
\end{align*}
We have thus proved the first half of the theorem. For the second { half}, {we look at the} following `Laplace transform',
\begin{equation*}
I_\beta:=\int_0^\infty e^{-\beta t}\inf_{x\in B_{R-\epsilon}(0)}\E|u_t(x)|^2\,\d x.
\end{equation*}
Using the mild formulation and the condition on $\sigma$, we have
\begin{align*}
\E|u_t (x)|^2 \geq & |(\mathcal{G}_{D}u_{0})_t (x)|^2 +\xi^2 l_{\sigma}^2 \int_{0}^{t}\int_{B_R(0)} {p_D}^2(t-s,x,y)\E|u_s (y)|^2 \d y \d s.
\end{align*}
From the above, we have $ I_\beta \geq I_1+ I_2$, where $I_1$ and $I_2$ are Laplace transforms of the first and second term of the above display respectively. We look at $I_1$ first. Note that for fixed $\epsilon >0$,
\begin{align*}
\inf_{x\in B_{R-\epsilon}(0)}(\mathcal{G}_{D}u_{0})_t (x)&\geq \int_{B_{R-\epsilon}(0)}u_0(y)p_D(t,x,y)\,\d y\\
&\geq c_4 \inf_{x,y\in B_{R-\epsilon}(0)}p_D(t,x,y).
\end{align*}
Using \eqref{pd2}, for $t\geq t_0$, we have
\begin{align*}
I_1&\geq \int_{t_0}^\infty e^{-\beta t}{\inf\limits_{x\in B_{R-\epsilon}(0)}}|(\mathcal{G}_{D}u_{\color{red}0})_t (x)|^2\,\d t\\
&\geq \frac{c_3e^{-(\beta+2\mu_1)t_0}}{\beta+2\mu_1}.
\end{align*}
For the second term, we obtain
\begin{align*}
I_2&\geq\xi^2 l_{\sigma}^2 I_\beta \int_{t_0}^\infty e^{-\beta s} \inf_{x\in B_{R-\epsilon}(0)}p_D^2(s,x,y)\,\d y\\
&\geq c_5\xi^2 l_{\sigma}^2 I_\beta\frac{ e^{-(\beta+2\mu_1)t_0}}{\beta+2\mu_2}.
\end{align*}
Combining the above inequalities yields
\begin{equation*}
I_\beta\geq \frac{c_3e^{-(\beta+2\mu_1)t_0}}{\beta+2\mu_1}+c_5\xi^2 l_{\sigma}^2 I_\beta\frac{ e^{-(\beta+2\mu_1)t_0}}{\beta+2\mu_2}.
\end{equation*}
We can {now} choose $\xi_1$ large enough so that for $\xi\geq {\xi_1}$, we have
\begin{equation*}
I_\beta\geq \frac{c_3e^{-(\beta+2\mu_1)t_0}}{\beta+2\mu_1}+2 I_\beta,
\end{equation*}
which gives us $I_\beta=\infty$. This proves
\begin{align*}
\liminf\limits_{t\to \infty} \frac{1}{t} \log \E|u_t (x)|^2 > 0.
\end{align*}
The fact that
\begin{align*}
\liminf\limits_{t\to \infty} \frac{1}{t} \log \E|u_t (x)|^2 <\infty
\end{align*}
easily follows from the ideas in \cite{FK}. We leave it to the reader to fill in the details.
 \end{proof}
 \subsection{Proof of Corollary \ref{energ1}}
 \begin{proof}[Proof of Corollary \ref{energ1}.]
The proof follows essentially from Theorem \ref{grw-dec1} and the definition of the energy of the solution together with the following estimate
\begin{align*}
|B_{R-\epsilon}(0)|\inf\limits_{x\in B_{R-\epsilon}(0)}{\E}|u_t (x)|^2 \leq \int_{B_R(0)} {\E}|u_t (x)|^2 dx \leq |B_R(0)| \sup\limits_{x\in B_R(0)} {\E}|u_t (x)|^2.
\end{align*}
 \end{proof}

 \subsection{Proof of Theorem \ref{gw-de-d}}
While { one can expect the proof of Theorem \ref{gw-de-d} to follow a similar pattern to that of Theorem \ref{grw-dec1}, the noise term is now colored thus the proof is harder and requires a new idea}. We provide the details of the proof of Theorem \ref{gw-de-d}. The proof of Corollary \ref{energd} is omitted since it is similar to that of  Corollary \ref{energ1}.
\begin{proof}[Proof of Theorem \ref{gw-de-d}.]
Using the mild formulation of the solution and the assumption on $\sigma$, we obtain
  \begin{align*}\label{moment-d}
  \E|u_t(x)|^2 &=|(\mathcal{G}_{D} u_{0})_t (x)|^2+  \xi^2 \int_{0}^{t}\int_{B_R(0)\times B_R(0)} p_D (t-s,x,y_1)p_D(t-s,x,y_2){f(y_1,y_2)}
   \\
  &\times \E| \sigma(u_s(y_1))\sigma(u_s(y_2))|\d y_1 \d y_2 \d s\\
 &\leq  |(\mathcal{G}_{D} u_{0})_t (x)|^2+  \xi^2 L_\sigma^2 \int_{0}^{t}\int_{B_R(0)\times B_R(0)} p_D (t-s,x,y_1)p_D(t-s,x,y_2){f(y_1,y_2)}
    \\
  &\times \E|u_s(y_1)u_s(y_2)|\d y_1 \d y_2 \d s
  \\
  &=  I_1+ I_2.
  \end{align*}
Set $\beta\in (0,\,2 \mu_1)$. Take $t_0$ as in \eqref{pd2}.  As is the proof of  Theorem \ref{grw-dec1}, we have
\begin{equation*}
I_1\leq c_1e^{-\beta t}\quad \text{whenever}\quad t>t_0.
\end{equation*}
We now bound $I_2$ by using Lemma \ref{lem-cor}.
\begin{align*}
I_2&\leq \xi^2 L_\sigma^2e^{-\beta t}\sup_{t>0, x\in B_R(0)}e^{\beta t}\E|u_t(x)|^2\\
&\times \int_0^\infty e^{\beta t}\int_{B_R(0)\times B_R(0)} p_D (t,x,y_1)p_D(t,x,y_2){f(y_1,y_2)}\d y_1 \d y_2 \d t\\
&\leq c_2\xi^2 L_\sigma^2e^{-\beta t}\sup_{t>0, x\in B_R(0)}e^{\beta t}\E|u_t(x)|^2.
\end{align*}
Using the two bounds above, we can use the arguments of the first part of the proof of Theorem \ref{grw-dec1} to show that
\begin{align*}
\limsup\limits_{t \to \infty} \frac{1}{t} \log \E|u_t(x)|^2<0 .
\end{align*}
The first part of our first theorem also gives us
\begin{align*}
 \liminf\limits_{t\to\infty }  \frac{1}{t} \log \E|u_t (x)|^2 > -\infty\quad { \text{for}\quad x\in B_{R-\epsilon}(0).}
\end{align*}
We now turn our attention to the final part of the proof. Fix $\beta,\epsilon>0$ {and} consider the following 'Laplace transform',
  \begin{align*}
  J_{\beta}:= \int_{0}^{\infty} e^{-\beta t}\inf\limits_{x,y\in B_{R-\varepsilon}(0)} \E|u_t(x)u_t(y)| \d t.
  \end{align*}
From the mild solution, we have
\begin{align}\label{cov}
  \E\big(u_t(x_1)u_t(x_2)\big)= (\mathcal{G}_D u_{0})_t (x_1)(\mathcal{G}_D u_{0})(x_2)
  + \xi^2\int_{0}^{t}{\int_{B_R(0)\times B_R(0)} }p_D(t-s,x_1,y_1)\nonumber\\
  \times p_D(t-s,x_2,y_2){f(y_1,y_2)}\E(\sigma(u_s(y_1))\sigma(u_s(y_2)))\d y_1\d y_2 \d s.
  \end{align}
Using the condition on $\sigma$, we have
\begin{align*}
 \E\big(&|u_t(x_1)u_t(x_2)|\big)\geq | (\mathcal{G}_D u_{0})_t (x_1)(\mathcal{G}_D u_{0})(x_2) |\\
 &+ \xi^2l_\sigma^2\int_{0}^{t}\int_{B_R(0)\times B_R(0)} p_D(t-s,x_1,y_1)p_D(t-s,x_2,y_2){f(y_1,y_2)}\E|u_s(y_1)u_s(y_2)|\d y_1\d y_2 \d s\\
 &:=J_1+J_2.
\end{align*}
We bound $J_2$ first by using the condition on the correlation function.
\begin{align*}
J_2&\geq \xi^2l_\sigma^2K_R\int_{0}^{t}\int_{B_{{R-\epsilon}}(0)\times B_{{R-\epsilon}}(0)} p_D(t-s,x_1,y_1)p_D(t-s,x_2,y_2)\E(|u_s(y_1)u_s(y_2)|)\d y_1\d y_2 \d s\\
&\geq c_3\xi^2l_\sigma^2K_R \int_0^t\inf_{y_1,\,y_2\in B_{{R-\epsilon}}(0)} p_D(t-s,x_1,y_1)p_D(t-s,x_2,y_2)\E(|u_s(y_1)u_s(y_2)|)\,\d s.
\end{align*}
Using these estimates, we have
\begin{align*}
  J_{\beta}\geq \tilde{J}_1+\tilde{J}_2,
\end{align*}
where $\tilde{J}_1$ and $\tilde{J}_2$ are the Laplace transform{s} of $J_1$ and $J_2$ respectively.
As in the proof of Theorem \ref{grw-dec1}, we have
\begin{align*}
\tilde{J}_1&\geq \int_0^\infty e^{-\beta t}| (\mathcal{G}_D u_{0})_t (x_1)(\mathcal{G}_D u_{0})(x_2) |\,\d t\\
&\geq \frac{c_4e^{-(\beta+2\mu_1)t_0}}{\beta+2\mu_1}{\quad\text{for}\quad x_1,\,x_2\in B_{R-\epsilon}(0)}.
\end{align*}
$\tilde{J}_2$ can be estimated using Lemma \ref{lmd2} as follows.
\begin{align*}
\tilde{J}_2\geq c_4\xi^2l_\sigma^2K_RJ_\beta \frac{e^{-(\beta+2\mu_1)t_0}}{\beta+2\mu_1}.
\end{align*}
We therefore have
\begin{align*}
J_\beta\geq\frac{c_4e^{-(\beta+2\mu_1)t_0}}{\beta+\mu_1}+c_4\xi^2l_\sigma^2K_RJ_\beta \frac{e^{-(\beta+2\mu_1)t_0}}{\beta+2\mu_1}.
\end{align*}
Therefore there exists a $\xi_3>0$ such that we have $J_\beta=\infty$ for $\xi\geq \xi_3$. Using the ideas above, we have
\begin{align*}
\int_0^\infty e^{-\beta t}\E |u_t(x)|^2\,\d t\geq c_5K_RJ_\beta \frac{e^{-(\beta+2\mu_1)t_0}}{\beta+2\mu_1} \quad { \text{for}\quad x\in B_{R-\epsilon}(0)}.
\end{align*}
Therefore for $\xi\geq \xi_3$, we obtain
\begin{align*}
\int_0^\infty e^{-\beta t}\E |u_t(x)|^2\,\d t=\infty,
\end{align*}
which implies that
\begin{align*}
\liminf\limits_{t\to \infty} \frac{1}{t} \log \E|u_t (x)|^2 > 0\quad { \text{for}\quad x\in B_{R-\epsilon}(0).}.
\end{align*}
Again the ideas of \cite{FK2} give
\begin{align*}
\liminf\limits_{t\to \infty} \frac{1}{t} \log \E|u_t (x)|^2 <\infty.
\end{align*}
The theorem is therefore proved.
\end{proof}
\section{Some extensions}
We conclude this paper with some extensions that can be proved using the  methods developed in our paper.
Since {o}ur { main theorems (Theorem \ref{grw-dec1} \& Theorem \ref{gw-de-d} ) are  about  second moments of the solution to the corresponding equation}, one may naturally  {ask}  if they also hold for higher moments. This is actually answered in the following theorems.
\begin{theorem}\label{grw-decp}

If  $u_t$ is the unique solution to \eqref{fspde}, then for all $p\geq 2$, there exists $\xi_0 (p){\color{red}>0}$ such that for all $\xi<\xi_0 (p)$ and $x\in B_R(0)$,
$$-\infty < \limsup\limits_{t\to\infty }  \frac{1}{t} \log {\E}|u_t (x)|^p < 0. $$
\\
On the other hand, for all $\varepsilon >0$, there exists $\xi_1 (p){\color{red} >0}$ such that for all $\xi > \xi_1(p)$ and $x\in B_{R-\epsilon}(0)$,
$$0< \liminf\limits_{t\to\infty}{\frac{1}{t}}\log {\E}|u_t(x)|^p < \infty.$$
\end{theorem}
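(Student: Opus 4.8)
The plan is to adapt the second-moment argument of Theorem~\ref{grw-dec1} to general $p\ge 2$ by combining the mild formulation with the Burkholder--Davis--Gundy inequality in place of the Walsh isometry. First I would write the mild solution \eqref{mild} and take $L^p(\Omega)$ norms; applying BDG to the stochastic integral and then Minkowski's integral inequality gives, for the white-noise equation \eqref{fspde},
\begin{align*}
\norm{u_t(x)}_p^2 \le 2|(\mathcal{G}_D u_0)_t(x)|^2 + c_p\,\xi^2 L_\sigma^2 \int_0^t \int_{B_R(0)} p_D^2(t-s,x,y)\,\norm{u_s(y)}_p^2\,\d y\,\d s,
\end{align*}
where $c_p$ is the BDG constant. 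This is structurally identical to \eqref{mild-sq} with $\xi^2 L_\sigma^2$ replaced by $c_p\xi^2 L_\sigma^2$, so the entire $I_1$/$I_2$ splitting goes through verbatim: bound $I_1$ by $c e^{-\beta t}$ using Lemma~\ref{lmd1}, bound $I_2$ using the semigroup property and Lemma~\ref{lm2} to get $K_{\beta,\mu_1,\alpha}\,c_p\,\xi^2 L_\sigma^2\, e^{-\beta t}\sup_{t,x}e^{\beta t}\norm{u_t(x)}_p^2$, and then choose $\xi_0(p)$ small enough that $K_{\beta,\mu_1,\alpha}\,c_p\,\xi^2 L_\sigma^2 < \tfrac12$. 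The extra $p$-dependence enters only through the constant $c_p$, which forces $\xi_0(p)$ to shrink as $p$ grows; this yields $\limsup_{t\to\infty}\tfrac1t\log\E|u_t(x)|^p<0$, and the finiteness of this $\limsup$ from below ($>-\infty$) follows as before from $\E|u_t(x)|^p\ge |(\mathcal{G}_D u_0)_t(x)|^p\ge c\,e^{-p\mu_1 t}$ via \eqref{pd2} and the assumption on $u_0$.

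For the lower-growth half ($\xi$ large), the key point is that the $p$-th moment dominates the second moment: by Jensen, $\E|u_t(x)|^p \ge (\E|u_t(x)|^2)^{p/2}$. Hence
\begin{align*}
\liminf_{t\to\infty}\frac1t\log\E|u_t(x)|^p \ge \frac{p}{2}\,\liminf_{t\to\infty}\frac1t\log\E|u_t(x)|^2 > 0
\end{align*}
as soon as $\xi>\xi_1$ from Theorem~\ref{grw-dec1}, so one may simply take $\xi_1(p):=\xi_1$. The upper bound $\liminf_{t\to\infty}\tfrac1t\log\E|u_t(x)|^p<\infty$ follows from the moment growth estimates in the style of \cite{FK}, exactly as invoked for the second moment — one sets up a Gronwall-type inequality for $\sup_{x}\norm{u_t(x)}_p^2$ using the BDG bound above and checks the relevant kernel integral $\int_0^t e^{\gamma s}p_D(2s,x,x)\,\d s$ grows at most like $e^{\gamma t}$ for $\gamma$ large via Lemma~\ref{lm2}-type reasoning, giving a finite exponential growth rate.

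I would then remark that the colored-noise analogue (stated implicitly as the next theorem for \eqref{frlapd}) is obtained in the same way: BDG plus Minkowski reduces the $p$-th moment to a bound involving $\int_0^t\int_{B_R(0)^2} p_D(t-s,x,y_1)p_D(t-s,x,y_2)f(y_1,y_2)\norm{u_s(y_1)}_p\norm{u_s(y_2)}_p\,\d y_1\,\d y_2\,\d s$, after using Cauchy--Schwarz on the correlation term, and then Lemma~\ref{lem-cor} and Lemma~\ref{lmd2} replace Lemma~\ref{lm2} and the Laplace-transform estimates exactly as in the proof of Theorem~\ref{gw-de-d}. The main obstacle — and the only place real care is needed — is the BDG/Minkowski step: one must justify that for the spatially correlated stochastic integral the $L^p(\Omega)$ norm is controlled by the iterated-kernel expression above with a constant depending only on $p$ (not on $t$ or $x$), which is where the Dalang-type condition and the structure of $f$ are used; once that inequality is in hand, everything reduces to the second-moment machinery already developed, with $p$ surfacing only through harmless multiplicative constants that dictate how small $\xi_0(p)$ must be.
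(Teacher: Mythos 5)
Your proposal matches the paper's intended argument exactly: the authors do not write out a proof but state that it follows from the Burkholder--Davis--Gundy inequality for the upper bound and Jensen's inequality for the lower bound (referring to \cite{FoonNual} for details), which is precisely your BDG/Minkowski reduction to the second-moment machinery plus the bound $\E|u_t(x)|^p \geq (\E|u_t(x)|^2)^{p/2}$. Your write-up is in fact more detailed than what the paper provides.
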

The proof of this theorem follows { from} the  Burkh{\"{o}}lder-Davis-Gundy inequality for the upper bound and Jensen's inequality for the lower bound. {We do not provide a proof here. The reader can refer to \cite{FoonNual} for details.}  Next we state   a result similar to Theorem  \ref{grw-decp} in higher dimension.
\begin{theorem}\label{gr-de-d-dimp}
Let $u_t(x)$ be the unique solution to \eqref{frlapd}, then  for all $p\geq 2$ there exists $\xi_2(p)>0$ such that for all $\xi<\xi_2(p)$ and $x\in B_R(0)$
\begin{align*}
-\infty<\limsup\limits_{t\to\infty}\frac{1}{t}\log {\E}|u_t(x)|^p<0.
\end{align*}
On the other hand, for all $\varepsilon>0$, there exists $\xi_3(p)>0$ such that for all $\xi>\xi_3(p)$ and $x\in B_{R-\varepsilon}(0)$
\begin{align*}
0<\liminf\limits_{t\to\infty}\frac{1}{t}\log {\E}|u_t(x)|^p<\infty.
\end{align*}
\end{theorem}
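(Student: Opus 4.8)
\textbf{Proof proposal for Theorem \ref{gr-de-d-dimp}.}

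The plan is to mimic the second-moment argument of Theorem \ref{gw-de-d}, replacing the Walsh isometry with the Burkh\"older--Davis--Gundy (BDG) inequality for the upper bound and using Jensen's inequality together with positivity for the lower bound. First I would set up the $p$-th moment estimate. Starting from the mild formulation \eqref{frlapd}, apply the $L^{p}$-BDG inequality to the stochastic integral term. After using the spatial correlation structure (the covariation of the Walsh-type martingale involves $f(y_1,y_2)$ integrated against two heat kernels) and Minkowski's integral inequality, one arrives at a bound of the form
\begin{align*}
\E|u_t(x)|^p \leq c\,|(\mathcal{G}_D u_0)_t(x)|^p + c\,\xi^p L_\sigma^p \left(\int_0^t \int_{B_R(0)\times B_R(0)} p_D(t-s,x,y_1)p_D(t-s,x,y_2) f(y_1,y_2)\,\|u_s\|_{p}^2\,\d y_1\,\d y_2\,\d s\right)^{p/2},
\end{align*}
where $\|u_s\|_p := \sup_{y\in B_R(0)}(\E|u_s(y)|^p)^{1/p}$. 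Pulling the supremum out and using H\"older in the time-space integral, the key quantity to control becomes exactly the kernel integral appearing in Lemma \ref{lem-cor}. Introducing the weight $e^{-\beta t}$ with $\beta\in(0,2\mu_1)$ and the semigroup/heat-kernel bounds \eqref{pd0} and \eqref{pd2} reduces everything to showing $\sup_{t>0,\,x\in B_R(0)} e^{\beta t}\E|u_t(x)|^p$ is finite, and then Lemma \ref{lem-cor} gives a bound $c + K\xi^p L_\sigma^p \sup(\cdots)$; choosing $\xi$ small so that $K\xi^p L_\sigma^p < 1/2$ closes the estimate and yields $\limsup_{t\to\infty}\frac1t\log\E|u_t(x)|^p < 0$. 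The lower bound $>-\infty$ follows as before from $\E|u_t(x)|^p \geq |(\mathcal{G}_D u_0)_t(x)|^p$ (Jensen) and the lower heat-kernel bound in \eqref{pd2} applied to the initial-condition term.

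For the growth half, I would first reduce the $p$-th moment lower bound to the second-moment lower bound already established in Theorem \ref{gw-de-d}: by Jensen's inequality $\E|u_t(x)|^p \geq (\E|u_t(x)|^2)^{p/2}$, so $\liminf_{t\to\infty}\frac1t\log\E|u_t(x)|^p \geq \frac{p}{2}\liminf_{t\to\infty}\frac1t\log\E|u_t(x)|^2 > 0$ for $x\in B_{R-\varepsilon}(0)$ whenever $\xi$ exceeds the threshold $\xi_3$ from Theorem \ref{gw-de-d}; thus one may simply take $\xi_3(p) := \xi_3$. The finiteness $\liminf_{t\to\infty}\frac1t\log\E|u_t(x)|^p < \infty$ is the matching upper bound on the growth rate; this follows from a Gronwall-type argument applied to $\E|u_t(x)|^p$ using BDG and the kernel bounds, exactly as indicated via the ideas of \cite{FK2} in the proof of Theorem \ref{gw-de-d} — the weighted integral $\int_0^t e^{\beta(t-s)} (\text{kernel})\,\d s$ is bounded uniformly by Lemma \ref{lem-cor}, forcing at most exponential growth.

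The main obstacle is the BDG step in the colored-noise, higher-moment setting: one must correctly identify the quadratic variation of the martingale $t\mapsto \int_0^t\int p_D(t-s,x,y)\sigma(u_s(y))F(\d s,\d y)$ as $\int_0^t\int\int p_D(t-s,x,y_1)p_D(t-s,x,y_2)\sigma(u_s(y_1))\sigma(u_s(y_2))f(y_1,y_2)\,\d y_1\,\d y_2\,\d s$, then raise it to the power $p/2$ and apply Minkowski's integral inequality in the right way to bring the $L^{p/2}$-norm of the integrand inside — this requires the positivity of $f$ and of the kernels so that no absolute-value cancellations are lost, and care with the exponent bookkeeping ($p/2 \geq 1$ is what makes Minkowski applicable). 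Once this estimate is in place, everything else is a routine repetition of the weighted-norm and Laplace-transform arguments from Sections 2 and 3, so I would not belabor those details and would refer the reader to \cite{FoonNual} for the fully analogous white-noise computations.
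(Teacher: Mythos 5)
Your proposal is correct and follows exactly the strategy the paper itself indicates (the paper gives no written proof of this theorem, only the remark that the upper bound follows from the Burkh\"older--Davis--Gundy inequality and the lower bound from Jensen's inequality, deferring details to \cite{FoonNual}); your BDG--Minkowski reduction to Lemma \ref{lem-cor} for the decay regime and the Jensen reduction $\E|u_t(x)|^p\geq(\E|u_t(x)|^2)^{p/2}$ to Theorem \ref{gw-de-d} for the growth regime are precisely the intended argument, and in fact supply more detail than the paper does.
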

In the remainder of this section, we show that the method developed in this paper can be used to study   problems with {operators other than the fractional Laplacian.}
 \begin{exmp}
 Consider the following stochastic {heat} equation with linear drift
 \begin{equation}\label{fspde1}
 \begin{cases}
 \partial_t u_t(x)=
  -{\nu}(-\Delta)^{\frac{\alpha}{2}} u_t(x)+\lambda u_t(x)+ \xi\sigma (u_t(x)) \dot W(t,x), \ \ \ \ x\in B_R(0), \ \ \ t>0\\
  u_t(x)= 0, \ \  \ x\in B_R(0)^{c}, \ \  t>0
 \end{cases}
 \end{equation}
 where $\lambda$ is a real number and all the other  conditions are the same as in \eqref{fspde}. { The mild solution is given by}
 \begin{equation}
  u_t(x)=(\mathcal{G}_{D}^* u_{0})_t (x)+  \xi \int_{B_R(0)}\int_{0}^{t} p_D^* (t-s,x,y)
    \sigma(u_s(y))F({\d}s,{\d}y)
 \end{equation}
 where $$p_D^*(t,x,y)= e^{\lambda t} p_D(t,x,y)
 $$ and
 $$(\mathcal{G}_D^* u_{ 0})_t(x)= \int_{B_R(0)}u_0(y)p_D^*(t,x,y) {\d}y.
 $$
 Then there exists $\xi_0(\lambda)>0$ such that for all $\xi<\xi_0(\lambda)$ and $x\in B_R(0)$
 $$-\infty<\limsup\limits_{t\to\infty}\frac{1}{t}\log {\E}|u_t(x)|^2<0
 $$
 while for $\varepsilon>0$ there exists $\xi_1(\lambda)>0$ such that for all $\xi>\xi_1(\lambda)$ and $x\in B_{R-\epsilon}(0)$
 $$0<\liminf\limits_{t\to\infty}\frac{1}{t}\log {\E}|u_t(x)|^2<\infty.
 $$
 The proof is very similar to that of  Theorem \ref{grw-dec1}, we only need to adjust  Lemma \ref{lm2} 
  as follows:
 $$\int_{0}^{\infty} e^{\beta t}p_D^*(t,x,x){\d}t\leq c_1\Bigg[(\lambda+\beta)^{-\frac{1}{\alpha}}+ \frac{1}{\mu_1-(\lambda +\beta)}\Bigg].
 $$
 for all $\beta >0$, $x\in B_R(0)$ provided $0<\lambda +\beta <\mu_1$.  Now if $x,y\in B_{R-\epsilon}(0)$ and $2(\mu_1-\lambda)+\beta>0$, we have
 $$\int_{0}^{\infty} e^{-\beta t}(p_D^*(t,x,y))^2 {\d}t\geq c_2 \frac{e^{-(2\mu_1-2\lambda+\beta)t_0}}{2\mu_1+2\lambda-\beta},
 $$
 where $c_1$ and $c_2$ are some positive constants.  With some modifications of the proofs above, Theorems  \ref{grw-dec1}, \ref{gw-de-d}, \ref{grw-decp} and  \ref{gr-de-d-dimp}  hold for the solution of  equation \eqref{fspde1}.
 \end{exmp}

 \begin{exmp}
 In this example we consider the generator of a relativistic stable process killed upon exiting $B_R(0)$ instead of the fractional Laplacian. We therefore look at
 \begin{equation}\label{rela}
 \begin{cases}
 \partial_t u_t(x)= m u_t(x)- (m^{\frac{2}{\alpha}}-\Delta)^{\frac{\alpha}{2}} u_t(x)+\xi\sigma(u_t(x)) \dot F(t,x), \ \ \ x\in B_R(0), \ \ t>0\\
 u_t(x)= 0, \ \ \ x\in {B_R(0)^c}.
 \end{cases}
 \end{equation}
{ {H}ere $m$ is some fixed positive number and all the other conditions are the same as in \eqref{gw-de-d}. { We refer  the reader to \cite{relativistic} for the needed heat kernel bounds to prove { appropriate versions of} Theorems  \ref{grw-dec1}, \ref{gw-de-d}, \ref{grw-decp} and  \ref{gr-de-d-dimp}.} We leave it for the reader to fill in the details.}
 \end{exmp}

 \begin{exmp}
 We conclude this section with this interesting problem. Let $1<\beta<\alpha<2$ and consider the following:
 \begin{equation}\label{double-delta}
 \begin{cases}
 \partial u_t(x)= -{\nu}(-\Delta)^{\frac{\alpha}{2}} u_t(x)-a^\beta(-\Delta)^{\frac{\beta}{2}}u_t(x)+\xi\sigma(u_t(x))\dot F(t,x), \ \ \ x\in B_R(0), \ \ t>0\\
 u_t(x)= 0, \ \ \ x\in {B_R(0)^c}.
 \end{cases}
 \end{equation}
{Here we refer the reader to  \cite{double-laplacian} { for the heat kernel bounds needed to prove  suitable versions of } Theorems  \ref{grw-dec1}, \ref{gw-de-d}, \ref{grw-decp} and  \ref{gr-de-d-dimp} for  the solution of the above equation. We leave it for the reader to fill in the details.}
 \end{exmp}

\bibliography{Foon}
\end{document}